\newtheorem{thm}{Theorem}[section]
\newtheorem{lem}[thm]{Lemma}
\theoremstyle{definition}
\newtheorem{defn}[thm]{Definition}
\theoremstyle{remark}
\newtheorem{rem}[thm]{Remark}
\newcommand{\id }{{\rm id}}
\newcommand{\e }{\varepsilon }
\renewcommand{\phi }{\varphi}
\newcommand{\Ker }{{\rm Ker }}
\renewcommand{\ll }{\langle\hspace{-.7mm}\langle }
\newcommand{\rr }{\rangle\hspace{-.7mm}\rangle }
\renewcommand{\d }{{\rm def} }
\newcommand{\vd }{{\rm vd}_{p}}
\newcommand{\colim}{\operatorname{colim}}
\newcommand{\caln}{{\mathcal N}}
\begin{document}

\title{Approximating the first $L^2$-betti number of residually finite groups}
\author{W. L\"uck, D. Osin}\thanks{This paper is financially supported by the Leibniz-award of the first author. The research of the second author was supported by
the NSF grant  DMS-1006345.}
        \address{Mathematicians Institut der Universit\"at Bonn\\
                Endenicher Allee 60\\
                53115 Bonn, Germany}
         \email{wolfgang.lueck@him.uni-bonn.de}
\address{Department of Mathematics, Vanderbilt University, Nashville, TN 37240, U.S.A.}
\email{denis.v.osin@vanderbilt.edu}
\date{}
\subjclass[2000]{ Primary: 20F65; Secondary: 58Jxx, 46Lxx} \keywords{First $L^2$-betti number, approximation conjecture, torsion group, residually finite group}

\begin{abstract}
We show that the first $L^2$-betti number of a finitely generated residually finite group can be estimated from below  by using ordinary first betti numbers of finite index normal subgroups. As an application we construct a finitely generated infinite residually finite torsion group with positive first $L^2$-betti number.
\end{abstract}

\maketitle

\section{Introduction}

Let $G$ be a finitely generated residually finite group $G$ and let $\{ N_i\}_{i\in \mathbb N}$ be a sequence of finite index normal subgroups of $G$ such that
\begin{equation}\label{N}
N_1\ge N_2 \ge \ldots ,\;\;\; {\rm and}\;\;\; \bigcap\limits_{i=1}^\infty N_i=\{ 1\} .
\end{equation}
The approximation theorem proved by the first author in \cite{Lueck94} implies that if $G$ is finitely presented, then
\begin{equation}\label{approx}
b_1^{(2)} (G)= \lim \limits_{i\to \infty} \frac{b _1 (N_i)}{[G:N_i]}.
\end{equation}
where $b_1^{(2)} (G)$ is the first $L^2$-betti number of $G$. In particular the limit in (\ref{approx}) exists and is independent of the choice of the sequence of normal subgroups satisfying (\ref{N}).

The question of whether (\ref{approx}) holds for any finitely generated group was open until now. It is partially motivated by some other open problems in group theory. For instance, the affirmative answer would imply the existence of a non-residually finite hyperbolic group \cite{Osin09} and would disprove either the cost vs first $L^2$-betti number conjecture or the fixed price conjecture (see \cite{Osin10} for details).

In this paper we first prove the following.

\begin{thm}\label{main1}
Let $G$ be a finitely generated residually finite group. For every sequence of finite index normal subgroups $\{ N_i\}_{i\in \mathbb N}$ of $G$ satisfying (\ref{N}), we have
\begin{equation}\label{ineq}
b _1^{(2)} (G)\ge \lim \limits_{i\to \infty}\sup \frac{b _1 (N_i)}{[G:N_i]}.
\end{equation}
\end{thm}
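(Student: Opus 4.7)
The plan is to represent $G$ by a possibly infinite presentation $2$-complex, read off both sides of (\ref{ineq}) from cellular chain complexes, and reduce the main inequality to the original L\"uck approximation theorem of \cite{Lueck94} applied to \emph{finite} submatrices of the $2$-boundary operator. The key observation is that L\"uck's 1994 theorem, in its formulation for a single finite matrix over $\mathbb{Z}[G]$ (equivalently, for a finite free $\mathbb{Z}[G]$-chain complex), requires only that $G$ be finitely generated and residually finite; finite presentability of $G$ is used only when one wants to treat the entire $\partial_2$ at once.

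Pick a finite generating set $s_1,\dots,s_n$ of $G$ and a countable set $\{r_j\}_{j\ge 1}$ of normal generators of the kernel of $F_n\twoheadrightarrow G$; let $X$ be the associated presentation $2$-complex (one vertex, $n$ edges, one $2$-cell per $r_j$), and let $X_i\to X$ be the connected regular cover with deck group $G/N_i$, so that $\pi_1(X_i)=N_i$ and $b_1(X_i)=b_1(N_i)$. Attaching cells of dimension $\ge 3$ to promote $X$ to a $K(G,1)$ does not affect $b_1^{(2)}$, hence $b_1^{(2)}(X)=b_1^{(2)}(G)$. Assuming $G$ infinite (otherwise the theorem is trivial), so that $b_0^{(2)}(G)=0$, and using connectedness of $X_i$ to compute the ranks in degree $0$, one gets
\begin{equation*}
b_1^{(2)}(G)=(n-1)-\dim_{\caln(G)}\overline{\operatorname{im}\partial_2},\qquad
b_1(N_i)=(n-1)[G:N_i]+1-\dim_{\mathbb{Q}}\operatorname{im}\partial_2(X_i),
\end{equation*}
where $\partial_2$ is the $\mathbb{Z}[G]$-linear $2$-boundary of the universal cover $\tilde X$ and $\partial_2(X_i)$ is the cellular $2$-boundary of $X_i$. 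Since $[G:N_i]\to\infty$, the theorem reduces to
\begin{equation*}
\liminf_{i\to\infty}\frac{\dim_{\mathbb{Q}}\operatorname{im}\partial_2(X_i)}{[G:N_i]}\ \ge\ \dim_{\caln(G)}\overline{\operatorname{im}\partial_2}.\qquad(\ast)
\end{equation*}

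For each $k\ge 1$ let $\partial_2^{(k)}\in M_{n,k}(\mathbb{Z}[G])$ be the restriction of $\partial_2$ to the first $k$ two-cells -- a single finite matrix -- so L\"uck's theorem \cite{Lueck94} delivers
\begin{equation*}
\dim_{\caln(G)}\overline{\operatorname{im}\partial_2^{(k)}}=\lim_{i\to\infty}\frac{\dim_{\mathbb{Q}}\operatorname{im}\partial_2^{(k)}(X_i)}{[G:N_i]}.
\end{equation*}
Two facts close the gap: the elementary inclusion $\operatorname{im}\partial_2^{(k)}(X_i)\subseteq\operatorname{im}\partial_2(X_i)$ at the finite-cover level makes the left-hand side of $(\ast)$ at least $\dim_{\caln(G)}\overline{\operatorname{im}\partial_2^{(k)}}$ for every $k$, while continuity of the $\caln(G)$-dimension under ascending unions of closed $G$-invariant subspaces of $\ell^2(G)^n$ gives $\sup_k\dim_{\caln(G)}\overline{\operatorname{im}\partial_2^{(k)}}=\dim_{\caln(G)}\overline{\operatorname{im}\partial_2}$. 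Taking the supremum over $k$ then proves $(\ast)$.

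The main obstacle is precisely the absence of a finite presentation. If $G$ were finitely presented, $\partial_2$ itself would be a finite matrix and L\"uck's theorem would immediately produce equality in $(\ast)$, hence in (\ref{ineq}). With infinitely many relators, the inclusion $\operatorname{im}\partial_2^{(k)}(X_i)\subseteq\operatorname{im}\partial_2(X_i)$ only goes one way at the finite-cover level, so after interchanging $\sup_k$ with $\liminf_i$ we recover only a one-sided estimate. This is exactly what yields the lower bound in (\ref{ineq}) and is consistent with the general approximation conjecture remaining open.
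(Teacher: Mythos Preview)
Your proof is correct and follows essentially the same strategy as the paper's: truncate the presentation to finitely many relators, apply L\"uck's approximation theorem to the resulting finite object, use the one-way inclusion of images (equivalently, the surjection on $H_1$) to get a one-way inequality at each truncation level, and invoke continuity of the von Neumann dimension under increasing unions to pass to the limit. The paper packages this more topologically---building a model for $BG$ as a mapping telescope of classifying spaces $BG_j$ of the finitely presented truncations $G_j$, and citing the approximation theorem for the finite $2$-skeleton of each finite sub-telescope---whereas you work directly with the presentation $2$-complex and the finite submatrices $\partial_2^{(k)}$ of the cellular boundary; your linear-algebraic formulation avoids the telescope construction and is somewhat more streamlined, but the mathematical content of the two arguments is identical.
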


Unfortunately Theorem \ref{main1} is not sufficient for the above-mentioned  applications. In fact, the opposite inequality would be sufficient, but our next result shows that it does not hold. Observe that the the right side of (\ref{approx}) equals $0$ for any sequence $\{ N_i\} $ whenever $G$ is a torsion group.

\begin{thm}\label{main2}
For every prime $p$, there exists a finitely generated infinite residually finite $p$-group with positive first $L^2$-betti number.
\end{thm}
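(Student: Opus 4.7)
The plan is to construct $G$ as a direct limit $F = G_0 \twoheadrightarrow G_1 \twoheadrightarrow \cdots \twoheadrightarrow G$, starting from a free group of suitable rank and adding at each stage one $p$-power relation placed deep in a prescribed filtration, so that (a) the limit is finitely generated, residually-$p$ finite, and $p$-torsion, and (b) the first $L^2$-Betti number is kept strictly positive by a mod-$p$ refinement of Theorem~\ref{main1}.

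Fix a prime $p$ and a sufficiently large integer $d$. Let $F$ be the free group of rank $d$, so $b_1^{(2)}(F) = d-1 > 0$. Enumerate $w_1, w_2, \ldots \in F$ and set $G_{n+1} = G_n / \ll w_n^{p^{k_n}}\rr$ with $k_n$ chosen so large that $w_n^{p^{k_n}}$ lies in the $n$th term $M_n$ of the Zassenhaus filtration of $G_n$ (possible since each $G_n$ is inductively residually-$p$). The direct limit $G = \varinjlim G_n$ is generated by the $d$ images of the free generators, is a $p$-torsion group (every element of $F$ is eventually killed to a $p$-power), and is residually-$p$ finite via the descending chain of finite $p$-group quotients $G \twoheadrightarrow G_n/M_n$. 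A separate Golod--Shafarevich argument, applied to the pro-$p$ completion of each $G_n$, ensures that the intermediate groups do not collapse and hence $G$ is infinite.

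The positive lower bound on $b_1^{(2)}(G)$ should come from an $\mathbb F_p$-analogue of Theorem~\ref{main1}, namely
\[
b_1^{(2)}(G) \;\ge\; \liminf_{n \to \infty} \frac{\dim_{\mathbb F_p} H_1(G/M_n;\mathbb F_p) - 1}{[G:M_n]}.
\]
Since $G/M_n$ is a finite $p$-group, $\dim_{\mathbb F_p} H_1(G/M_n;\mathbb F_p)$ equals its minimum number of generators; Golod--Shafarevich-type estimates on the pro-$p$ generation ranks of open subgroups give a linear lower bound on this in terms of $[G:M_n]$, keeping the $\liminf$ a positive constant that depends only on $d$ and $p$. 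The main obstacle is establishing this $\mathbb F_p$-refinement of Theorem~\ref{main1}: the rational-coefficient proof passes through the spectral theorem for Laplacians in the group von Neumann algebra, and the mod-$p$ analogue requires controlling the $p$-power torsion in $H_1(N_i;\mathbb Z)$ of the intermediate normal subgroups. A secondary difficulty is the compatibility of the Golod--Shafarevich bounds with the torsion-producing relations, which dictates how rapidly the depths $k_n$ must grow so that the mod-$p$ rank gradient persists through the entire construction while $G$ remains a genuine $p$-torsion group.
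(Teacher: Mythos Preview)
Your high-level architecture (start from a free group, add one $p$-power relation at a time placed deep in a residually-$p$ filtration, take the limit) is exactly the skeleton the paper uses. The genuine gap is in how you propose to control $b_1^{(2)}$ of the limit group.

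You correctly observe that Theorem~\ref{main1} as stated is useless here: the limit $G$ is torsion, so every finite-index subgroup has rational $b_1$ equal to $0$ and the right-hand side of~(\ref{ineq}) vanishes. Your remedy is to assert an $\mathbb F_p$-refinement
\[
b_1^{(2)}(G)\;\ge\;\liminf_n \frac{\dim_{\mathbb F_p} H_1(M_n;\mathbb F_p)-1}{[G:M_n]}
\]
(I assume you meant the subgroup $M_n$, not the finite quotient $G/M_n$; the latter has $\mathbb F_p$-rank at most $d$, so the ratio tends to $0$). This inequality is the hard direction of the mod-$p$ approximation problem and is \emph{not} known. The easy inequality goes the wrong way: since $\dim_{\mathbb F_p} H_1(N;\mathbb F_p)\ge b_1(N)$, L\"uck approximation gives (for finitely presented $G$) only that the mod-$p$ gradient dominates $b_1^{(2)}$, not the reverse. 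The proof of Theorem~\ref{main1} really does use the spectral calculus in $\mathcal N(G)$, and there is no known substitute over $\mathbb F_p$; you acknowledge this yourself as ``the main obstacle'' but do not resolve it. As it stands, the proposal reduces the theorem to an open problem.

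The paper sidesteps this entirely. It never applies Theorem~\ref{main1} to the torsion limit group. Instead it tracks, along the sequence of finitely presented groups $G_k$, an invariant called the \emph{$p$-virtual deficiency}
\[
\vd(G_k)=\sup_{[G_k:H]=p^{\bullet}}\frac{\d(H)-1}{[G_k:H]},
\]
and shows (Lemma~3.4) that adding a single relation $g^{p^n}$ with $n$ large decreases $\vd$ by as little as one likes. High deficiency of a finite-index subgroup $H\le G_k$ forces high \emph{rational} $b_1$ of the corresponding finite-index subgroup of $\widehat G_k$ (Lemma~3.5), so the rational Theorem~\ref{main1} applied to each $\widehat G_k$ gives $b_1^{(2)}(\widehat G_k)\ge \vd(G_k)>n-1-\varepsilon$ (Lemma~3.6). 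The passage to the torsion limit $Q=\varinjlim \widehat G_k$ is then handled not by any approximation theorem but by Pichot's semi-continuity of $b_1^{(2)}$ on the space of marked groups, yielding $b_1^{(2)}(Q)\ge n-1-\varepsilon$.

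In short: the key idea you are missing is to apply Theorem~\ref{main1} to the \emph{intermediate} (non-torsion) groups $\widehat G_k$, where rational Betti numbers of finite-index subgroups are still large thanks to the deficiency bookkeeping, and then invoke semi-continuity rather than any mod-$p$ statement to reach the torsion limit.
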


Our interest in groups constructed in Theorem \ref{main2} is also motivated by von-Neumann-type problems. Recall that the original von Neumann problem (sometimes referred to as the von Neumann--Day problem) asks whether there exist  non-amenable groups without non-abelian free subgroups. This question was first answered affirmatively by Olshanskii in \cite{Ols} and since then many other examples have been constructed including residually finite \cite{Ers} and finitely presented ones \cite{OS}. Similar problems were considered for groups satisfying other conditions close to non-amenability (see, e.g., \cite{EM}, \cite{Osin09}). Interestingly, a result of Lackenby \cite[Theorem 1.6]{Lac} implies that if a finitely presented infinite group has positive first $L^2$-betti number (which can be thought of as an extreme form of non-amenability) and is residually $p$-finite, then it does contain non-abelian free subgroups. Our Theorem \ref{main2} shows that this result can not be extended to all finitely generated groups.

\section{Estimating the first $L^2$-betti number from below}

In this section we prove Theorem \ref{main1}. The main ingredient of the proof is the result of the first author stating that the approximation conjecture holds for groups from a class $\mathcal G$, which includes in particular all residually finite groups. We refer to \cite[Chapter 13]{Lueck} for more details.

\begin{proof}[Proof of Theorem \ref{main1}]
  Choose a presentation
  \[G = \langle s_1,s_2, \ldots s_g\mid R_1, R_2, \ldots \rangle,\]
  where the
  number of generators is finite.  For
  a natural number $j$, let $G_j$ be the finitely presented group given by the
  presentation
  \[
   G = \langle s_1,s_2, \ldots s_g\mid R_1, R_2, \ldots R_j\rangle.
   \]
  Let $\psi_i \colon G_i \to G$ and $\phi_{j,k} \colon G_j \to G_k$ for $j \le k$ be
  the obvious projections. We have
  \[
   G = \colim_{j} G_j.
   \]

   The system of group homomorphisms
  \[G_0 \xrightarrow{\phi_{0,1}} G_1 \xrightarrow{\phi_{1,2}} G_2
  \xrightarrow{\phi_{2,3}} \cdots\] induces a system of maps of $CW$-complexes
  \[BG_0 \xrightarrow{\psi_{0,1}} BG_1 \xrightarrow{\psi_{1,2}} BG_2
  \xrightarrow{\psi_{2,3}} \cdots\] We can arrange that the $2$-skeleton of
  $BG_j$ is finite for all $j \ge 0$ since each $G_j$ is finitely presented.
  Let $X$ be the infinite mapping telescope of this system. It is a
  $CW$-complex.  Since we have for $k \ge 0$
  \[
  \pi_k(X) = \colim_j \pi_k(BG_j)
  \]
  we conclude
  \[
  \pi_k(X) =
  \begin{cases} \{1\} & k \ge 2;\\ G & k = 1.
  \end{cases}
  \]
  Hence $X$ is  a model for $BG$.

  Let $X_j$ be the mapping telescope of the
  finite system
  \[
  BG_0 \xrightarrow{\psi_{0,1}} BG_1 \xrightarrow{\psi_{1,2}} BG_2
  \xrightarrow{\psi_{2,3}} \cdots \xrightarrow{\psi_{j-1,j}} BG_j
  \]
  Then the $2$-skeleton of $X_j$ is finite and we have the nested sequence of
  $CW$-subcomplexes
  \[X_0 \subseteq X_1 \subseteq X_2 \subseteq \cdots\] of $X$ satisfying
  $X = \bigcup_{j} X_j$.  Let $\widetilde{X} \to X$ be the universal covering of
  $X$. This is a model for the universal principal $G$-bundle $G \to EG \to BG$.
  Let $\widetilde{X_n}$ be its restriction to $X_n$.  We obtain a nested
  sequence of free $G$-$CW$-subcomplexes
  \[
  \widetilde{X}_0 \subseteq \widetilde{X}_1 \subseteq \widetilde{X}_2
  \subseteq \cdots
  \]
  of $\widetilde{X}$ satisfying $\widetilde{X} = \bigcup_{j}  \widetilde{X}_j$.

  Since taking homology and taking tensor products are compatible
  with directed colimits, we get
  \[
  H_1\bigl(\widetilde{X_n};\caln(G)\bigr)
  = \colim_{j} H_1\bigl(\widetilde{X};\caln(G)\bigr).
  \]
  Since the $G$-$CW$-complex $\widetilde{X}_j$ has only finitely many orbits of cells of dimension at most $2$ for all $j \ge 0$, we get $\dim_{\caln(G)}\bigl(H_1(\widetilde X_j;\caln(G)\bigr)< \infty$ for all $j \ge 0$.
  Each map $\phi_{j-1,j} \colon G_{j-1} \to G_j$ is surjective. Hence each map
  $\psi_{j-1,j} \colon BG_{j-1} \to BG_j$ is $1$-connected. Therefore
  each inclusion $\widetilde{X}_{j-1} \to \widetilde{X}_j$ is $1$-connected.
  This implies that the induced map
  $H_1\bigl(\widetilde{X}_{j-1},\caln(G)\bigr) \to H_1\bigl(\widetilde X_{j},\caln(G)\bigr)$
  is surjective. Hence we get for $j \ge 1$
  \[ \dim_{\caln(G)}\left(H_1\bigl(\widetilde{X}_{j-1};\caln(G)\bigr)\right)
  \ge  \dim_{\caln(G)}\left(H_1\bigl(\widetilde{X}_j;\caln(G)\bigr)\right).
  \]
  We conclude
  from~\cite[Theorem~6.13~(2) on page~243]{Lueck}
  \begin{eqnarray}
  \dim_{\caln(G)}\left(H_1\bigl(\widetilde{X};\caln(G)\bigr)\right)
  & = &
  \lim_{j} \dim_{\caln(G)}\left(H_1\bigl(\widetilde{X}_j;\caln(G)\bigr)\right).
  \label{lim_b_1(2)}
\end{eqnarray}

  Consider a nested sequence
  $G = N_0 \supseteq N_1 \supseteq N_2 \supseteq N_3 \supseteq \cdots$ of normal
  subgroups of finite index such that $\bigcap_{i = 0}^{\infty} N_i = \{1\}$.
  Since $G$ is residually finite it satisfies the Approximation Conjecture (see \cite[Conjecture 13.1]{Lueck}), and we obtain
  \begin{eqnarray}
  \dim_{\caln(G)}\left(H_1\bigl(\widetilde{X}_j;\caln(G)\bigr)\right)
  & = &
  \lim_{i} \frac{b_1(N_i\backslash \widetilde{X}_j)}{[G : N_i]}.
   \label{approxi_fin_pres}
  \end{eqnarray}
Indeed this follows from ~\cite[Theorem~13.3 on page~454]{Lueck} applied to the $2$-skeleton of $\widetilde{X}_j$ (which has only finitely many orbits of cells) and the observation that both sides of (\ref{approxi_fin_pres}) only depend on the $2$-skeleton.

  Since $\widetilde{X}_j \to \widetilde{X}$ is $1$-connected, the map
 $N_i\backslash \widetilde{X}_j \to N_i\backslash \widetilde{X}$ is $1$-connected
   and hence $b_1(N_i\backslash \widetilde{X}_j) \ge b_1(N_i\backslash \widetilde{X})$
   holds for all $j \ge 0$. This implies for all $j \ge 0$
   \begin{eqnarray}
  \lim_{i} \frac{b_1(N_i\backslash \widetilde{X}_j)}{[G : N_i]}
  & \ge &
  \limsup_{i} \frac{b_1(N_i\backslash \widetilde{X})}{[G : N_i]}.
    \label{lim_greater_equal_limsup}
  \end{eqnarray}
  Since the $G$-$CW$-complex $\widetilde{X}$ is a model for $EG$, we get
  $b_1(N_i\backslash \widetilde{X}) = b_1(N_i)$ for all $i$ and
  $b_1^{(2)}(G) = \dim_{\caln(G)}\left(H_1\bigl(\widetilde{X};\caln(G)\bigr)\right)$.
  We conclude from~\eqref{approxi_fin_pres}
  and~\eqref{lim_greater_equal_limsup} for all $j$
  \begin{eqnarray}
  \dim_{\caln(G)}\bigl(\widetilde{X}_j;\caln(G)\bigr)
  & \ge &
  \limsup_{i} \frac{b_1(N_i)}{[G : N_i]}
  \label{nearly_ready}
  \end{eqnarray}
  Now~\eqref{lim_b_1(2)} and~\eqref{nearly_ready} imply
  \[b_1^{(2)}\bigl(G) \ge  \limsup_{i \to \infty} \frac{b_1(N_i)}{[G:N_i]}.\]\end{proof}

\section{Virtual deficiency of finitely presented groups}

In what follows, ``$p$-finite" always means ``equal to a power of $p$". Given two elements $x,y$ of a group $G$, we  write $x^y$ for $y^{-1}xy$. We denote by $\ll S\rr ^G$ (or just $\ll S\rr $ if no confusion is
possible) the normal closure of a subset $S$ in $G$, i.e., the smallest normal subgroup of $G$ containing $S$. Finally if $G$ is finitely presented, $\d (G)$ denotes the deficiency of $G$, i.e., the maximum of the difference between the number of generators and the number of relations over all finite presentations of $G$.

Let $G=F/R$ be a finitely presented group, where $F=\langle x_1, \ldots , x_d\rangle $ is free of rank $d$ and $R=\ll R_1, \ldots , R_r\rr ^F$. Let $H$ be a finite index subgroup of $G$, $K$ the full preimage of $H$ in $F$. By the Nilsen-Schreier formula $K$ is a free group of rank $(d-1)j+1$, where $j=[F:K]=[G:H]$. It is straightforward to check that $R=\ll R_i^t \mid i=1, \ldots, r,\, t\in T\rr ^K$, where $T$ is a set of left coset representatives of $K$ in $F$. Thus $H=K/R$ has a presentation with $(d-1)j+1$ generators and $r|T|=rj$ relations. In particular,
$$
\d (H)-1\ge
(\d (G)-1)[G:H].
$$
We will refer to this property of deficiency as {\it supermultiplicativity}.

\begin{defn}
Let $G$ be a finitely presented group. We define the {\it $p$-virtual deficiency} of $G$ by
\begin{equation}\label{vd}
\vd (G)= \sup\frac{\d (H)-1}{[G:H]},
\end{equation}
where the supremum is taken over all normal subgroups $H\le G$ of $p$-finite index.
\end{defn}

\begin{rem}
Clearly every group of positive $p$-virtual deficiency is infinite. Using supermultiplicativity it is easy to show that the definition does not change if we take the supremum over all (not necessarily normal) subgroups of $p$-finite index in $G$. We do not know if the supremum in (\ref{vd}) is always achieved and whether it is always rational. Some applications of a similar notion of $p$-deficiency can be found in \cite{BT}.
\end{rem}

The following lemma was proved in \cite[Lemma 2.3]{Osin10}. (The proof is an easy exercise.)

\begin{lem}\label{L23}
Let $G$ be a finitely presented group, $N$ a finite index normal subgroup of $G$, $g$ an element of $G$, $m$ the order of $gN$ in $G/N$. Let $M$ denote the natural image of $N$ in the quotient group $G/\ll g^m\rr $. Then $$\d (M)\ge \d (N) - [G:N]/m .$$
\end{lem}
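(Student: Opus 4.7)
The plan is to exhibit a finite presentation of $M$ that is only mildly larger than an optimal presentation of $N$. Start by fixing a finite presentation
\[
N = \langle x_1,\ldots,x_d \mid S_1,\ldots,S_r\rangle
\]
realizing the deficiency, so that $d-r = \d(N)$ (such a presentation exists because $\d(N)$ is bounded above, e.g.\ by $b_1(N)$, and is an integer). Since $g^m \in N$, we can rewrite $g^m$, and indeed any $G$-conjugate of $g^m$ that lies in $N$, as a word in $x_1,\ldots,x_d$. Then $M = N/(\ll g^m\rr^G \cap N) = N/\ll g^m\rr^G$ because $\ll g^m\rr^G \subseteq N$ (the normal subgroup $N$ contains $g^m$ and all its $G$-conjugates). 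So to obtain a presentation of $M$, I need only augment the relations of $N$ by a set of $N$-normal generators for $\ll g^m\rr^G$.

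The key counting step is to bound the number of $N$-normal generators needed. A general generating set is $\{(g^m)^x : x \in G\}$, but two things collapse this set: first, if $x' = xn$ with $n \in N$, then $(g^m)^{x'} = n^{-1}(g^m)^{x} n$ is $N$-conjugate to $(g^m)^x$, so only the coset $xN$ matters; second, $g$ commutes with $g^m$, so $(g^m)^{gx} = x^{-1} g^{-1} g^m g x = (g^m)^x$, meaning only the $\langle g\rangle$-coset of $xN$ in $G/N$ matters. Thus the $N$-normal closures of $(g^m)^x$ and $(g^m)^{x'}$ coincide whenever $xN$ and $x'N$ lie in the same orbit of $\langle gN \rangle$ acting by left multiplication on $G/N$. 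Since $|\langle gN\rangle|=m$ and $|G/N|=[G:N]$, there are exactly $[G:N]/m$ such orbits. Pick orbit representatives $t_1,\ldots,t_{[G:N]/m}\in G$; then $\ll g^m\rr^G$ is the $N$-normal closure of $(g^m)^{t_1},\ldots,(g^m)^{t_{[G:N]/m}}$.

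Expressing each $(g^m)^{t_i}$ as a word $W_i(x_1,\ldots,x_d)$ in the generators of $N$ (possible since each $(g^m)^{t_i}\in N$), I obtain the finite presentation
\[
M = \langle x_1,\ldots,x_d \mid S_1,\ldots,S_r,\; W_1,\ldots,W_{[G:N]/m}\rangle,
\]
with $d$ generators and $r + [G:N]/m$ relations. By definition of deficiency,
\[
\d(M) \;\ge\; d - r - [G:N]/m \;=\; \d(N) - [G:N]/m,
\]
as required.

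The main (mild) obstacle is the counting argument: one must correctly identify which $G$-conjugates of $g^m$ need to be added as $N$-normal relators and observe both the $N$-action redundancy and, crucially, the $\langle g\rangle$-action redundancy coming from $g^m$ being central in $\langle g\rangle$. Once both redundancies are combined, the count drops from $[G:N]$ down to $[G:N]/m$, which is exactly the penalty appearing in the statement.
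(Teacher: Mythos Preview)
Your proof is correct. The paper itself does not prove this lemma, but simply cites \cite[Lemma~2.3]{Osin10} and remarks that the proof is an easy exercise; the argument you give---taking an optimal presentation of $N$, writing $M=N/\ll g^m\rr^G$, and observing that $\ll g^m\rr^G$ is the $N$-normal closure of $[G:N]/m$ conjugates of $g^m$ because every $x\in G$ can be written as $g^j t_i n$ with $n\in N$---is exactly the intended exercise.
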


Lemma \ref{L23} can be used to construct nontrivial examples of groups with positive $p$-virtual deficiency. For a group $G$, we denote by $\widehat G_p$ the image of $G$ in its pro-$p$ completion. That is $\widehat G_p=G/R$, where $R$ is the intersection of all normal subgroups $N$ of $p$-finite index in $G$.

\begin{lem}\label{quot}
Let $G$ be a finitely presented group. Suppose that the image of an element $g\in G$ in $\widehat G_p$ has infinite order. Then for every $\delta >0$ there exist arbitrary large integers $n$ such that
\begin{equation}\label{vdG}
\vd\left(G/\ll g^{p^n}\rr \right)\ge \vd (G)-\delta .
\end{equation}
\end{lem}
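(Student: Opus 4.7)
The plan is to reduce to a single application of Lemma~\ref{L23} by exhibiting, for each desired lower bound on $n$, one normal $p$-finite index subgroup $N\trianglelefteq G$ that simultaneously (i) nearly realises the supremum in the definition of $\vd(G)$ and (ii) makes the order of $gN$ in $G/N$ a large power of $p$. The two auxiliary facts I would use are (a) \emph{supermultiplicativity} of deficiency, recorded just before Definition~\ref{vd}, to show that intersecting two normal $p$-finite index subgroups cannot decrease the ratio $(\d(\cdot)-1)/[G:\cdot]$, and (b) the hypothesis that $g$ has infinite order in $\widehat{G}_p$, which forces the orders of $gN'$ in the finite $p$-quotients $G/N'$ to be unbounded.

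Fix $\delta>0$ and an arbitrary threshold $k\ge 1$ with $p^{-k}\le \delta/2$. First, choose $N_0\trianglelefteq G$ of $p$-finite index such that $(\d(N_0)-1)/[G:N_0]\ge \vd(G)-\delta/2$. Next, use the infinite-order assumption to find a normal $p$-finite index subgroup $N_1\trianglelefteq G$ with $g^{p^{k-1}}\notin N_1$; since $G/N_1$ is a $p$-group, the order of $gN_1$ in $G/N_1$ is then a power of $p$ at least $p^k$. Set $N=N_0\cap N_1$; this is again normal of $p$-finite index, as $G/N$ embeds into the $p$-group $G/N_0\times G/N_1$. Supermultiplicativity applied to the inclusion $N\le N_0$ yields
\[
\frac{\d(N)-1}{[G:N]}\;\ge\;\frac{\d(N_0)-1}{[G:N_0]}\;\ge\;\vd(G)-\delta/2,
\]
while the surjection $G/N\twoheadrightarrow G/N_1$ shows that the order $m$ of $gN$ in $G/N$ is a power of $p$ and satisfies $m\ge p^k$.

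Now apply Lemma~\ref{L23} to $N$ and $g$: the image $M$ of $N$ in $G/\ll g^m\rr$ satisfies $\d(M)\ge \d(N)-[G:N]/m$. Because $g^m\in N$ and $N$ is normal in $G$, we have $\ll g^m\rr^G\subseteq N$, so $M$ is normal in $G/\ll g^m\rr$ of $p$-finite index exactly $[G:N]$. Writing $m=p^n$, we obtain
\[
\vd\bigl(G/\ll g^{p^n}\rr\bigr)\;\ge\;\frac{\d(M)-1}{[G:N]}\;\ge\;\frac{\d(N)-1}{[G:N]}-\frac{1}{p^n}\;\ge\;\vd(G)-\frac{\delta}{2}-\frac{1}{p^k}\;\ge\;\vd(G)-\delta,
\]
with $n\ge k$. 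Since the threshold $k$ was arbitrary, arbitrarily large such $n$ are produced.

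The main subtlety is that Lemma~\ref{L23} only controls $G/\ll g^m\rr$ for the specific $m$ dictated by $N$, so we do not get to prescribe $n$ directly; fortunately, the statement only requires \emph{some} $n\ge k$ to appear, which is exactly what this construction delivers. The one genuinely nontrivial step is the combined use of supermultiplicativity (to pass to the intersection $N=N_0\cap N_1$ without loss in the deficiency ratio) and the unbounded-order consequence of the hypothesis on $g$; everything else is bookkeeping.
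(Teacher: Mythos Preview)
Your proof is correct and essentially identical to the paper's: the paper also chooses a near-optimal $p$-finite index normal subgroup $H$ (your $N_0$), intersects it with the kernel of a $p$-finite quotient in which $g$ has large order (your $N_1$), invokes supermultiplicativity to preserve the deficiency ratio under the intersection, and then applies Lemma~\ref{L23} to the resulting $N$. Your write-up even makes explicit a couple of points the paper leaves implicit (e.g.\ that $\ll g^m\rr\le N$ so the index is preserved).
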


\begin{proof}
Let us fix any $K>0$. We wish to find $n\ge K$ that satisfies (\ref{vdG}). Without loss of generality we can assume that
\begin{equation}\label{q1}
\frac1{p^K}\le \delta /2.
\end{equation}
Since $g$ has infinite order in $\widehat G_p$, there is a $p$-finite quotient  $Q$ of $G$ such that the order of the image of $g$ in $Q$ is at least $p^K$. Let $H$ be a normal subgroup of $p$-finite index in $G$ such that
\begin{equation}\label{q2}
\frac{\d (H)-1}{[G:H]} \ge \vd (G)-\delta/2
\end{equation}
and let $N=\Ker (G\to Q)\cap H$. Clearly $N$ is normal of $p$-finite index in $G$. Since $N\le \Ker (G\to Q)$, the order of $gN$ in $G/N$ is $p^n$ for some $n\ge K$. Let $M$ denote the image of $N$ in $G_1=G/\ll g^{p^n}\rr $. Note that $[G_1:M]=[G:N]$. Using subsequently Lemma \ref{L23}, supermultiplicativity, (\ref{q1}), and (\ref{q2}) we obtain
$$
\frac{\d (M)-1}{[G_1:M]}\ge \frac{\d (N)- 1}{[G:N]} - \frac1{p^n}\ge \frac{\d (H)-1}{[G:H]}- \frac1{p^K}\ge  \vd (G)-\delta.
$$
\end{proof}

The next two lemmas allow us to estimate the first betti and $L^2$-betti numbers of some (not necessarily finitely presented) residually finite groups.

\begin{lem}\label{b1}
For any finitely presented group $G$, we have $b_1(\widehat G_p)\ge \d (G)$.
\end{lem}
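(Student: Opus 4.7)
The plan is to establish the chain
\[ b_1(\widehat G_p) \;\ge\; b_1(G) \;\ge\; \d(G), \]
where the right-hand inequality is the standard bound obtained by abelianizing a presentation, and the left-hand inequality reflects that the torsion-free part of $G^{ab}$ survives passage to the maximal residually $p$-finite quotient $\widehat G_p$.

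For the right-hand inequality I would fix a finite presentation $G = \langle x_1,\ldots,x_d \mid R_1,\ldots,R_r\rangle$ realizing the deficiency, so $d-r = \d(G)$. Abelianizing gives $G^{ab} \cong \mathbb Z^d/M$, where $M$ is the subgroup of $\mathbb Z^d$ spanned by the $r$ exponent-sum vectors of the $R_i$. Since $M$ has $r$ generators its rank is at most $r$, so the torsion-free rank of $G^{ab}$, which equals $b_1(G)$, is at least $d-r = \d(G)$.

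For the left-hand inequality I would write $G^{ab} \cong \mathbb Z^{b_1(G)} \oplus T$ with $T$ a finite abelian group, and let $f\colon G \twoheadrightarrow \mathbb Z^{b_1(G)}$ be the composition of abelianization with the projection killing $T$. The key observation is that $\mathbb Z^{b_1(G)}$ is residually $p$-finite --- any nonzero vector is detected in $(\mathbb Z/p^k)^{b_1(G)}$ for $k$ large enough --- so $\Ker f$ is an intersection of normal subgroups of $G$ of $p$-power index. Hence $\Ker f$ contains the $p$-residual of $G$, and $f$ factors through $\widehat G_p$. Therefore $\widehat G_p$ surjects onto $\mathbb Z^{b_1(G)}$, which forces $b_1(\widehat G_p) \ge b_1(G)$ and completes the proof.

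The argument is elementary and I do not anticipate a real obstacle; the only conceptually substantive step is recognizing that the free part of $G^{ab}$ is already visible at the level of residually $p$-finite quotients, which is just the residual $p$-finiteness of $\mathbb Z^{b_1(G)}$.
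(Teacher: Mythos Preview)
Your proof is correct and follows essentially the same route as the paper's. The paper abelianizes a presentation with $d$ generators and $r$ relations to get a surjection $G \twoheadrightarrow \mathbb Z^{d-r}$, then invokes residual $p$-finiteness of free abelian groups to factor this through $\widehat G_p$; you do the same thing but pass explicitly through the intermediate quantity $b_1(G)$, which only adds a slight refinement (your surjection lands in $\mathbb Z^{b_1(G)}$ rather than $\mathbb Z^{d-r}$). The key idea---that the torsion-free part of $G^{ab}$ survives in $\widehat G_p$ because $\mathbb Z^m$ is residually $p$-finite---is identical in both arguments.
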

\begin{proof}
If $G$ has a presentation with $d$ generators and $r$ relations, then $G/[G,G]$ is a quotient of $\mathbb Z^d$ by a subgroup of rank at most $r$. It is well-known that $G/[G,G]$ maps onto $\mathbb Z^{d-r}$ in this case. Since free abelian groups are residually $p$-finite, $\widehat G_p$ also maps onto $\mathbb Z^{d-r}$ and hence $b _1 (\widehat G_p)\ge d-r$.
\end{proof}

\begin{lem}\label{b1Gp}
For any finitely presented group $G$, we have
\begin{equation}\label{b12G}
b_1^{(2)} (\widehat G_p )\ge \vd (G).
\end{equation}
\end{lem}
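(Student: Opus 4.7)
The approach is to apply Theorem~\ref{main1} to the finitely generated residually ($p$-)finite group $\widehat{G}_p$ along a chain of normal subgroups built from a subgroup of $G$ of nearly optimal deficiency ratio. Fix $\varepsilon > 0$ and pick a normal subgroup $H \lhd G$ of $p$-finite index with $(\d (H) - 1)/[G:H] \ge \vd (G) - \varepsilon$. Since $G$ is finitely generated, only countably many normal subgroups of $G$ of $p$-finite index lie inside $H$, and their total intersection equals the kernel $R$ of $G \twoheadrightarrow \widehat{G}_p$. By successive intersection I form a nested chain $L_1 \supseteq L_2 \supseteq \cdots$ of normal subgroups of $G$ of $p$-finite index, all contained in $H$, with $\bigcap_i L_i = R$. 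The images $\overline{L_i} := L_i/R$ are then a nested chain of normal subgroups of $\widehat{G}_p$ of index $[G:L_i]$ with trivial intersection, and Theorem~\ref{main1} gives
\[
b_1^{(2)}(\widehat{G}_p) \;\ge\; \limsup_i \frac{b_1(\overline{L_i})}{[G:L_i]}.
\]

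In order to bound $b_1(\overline{L_i})$ below via Lemma~\ref{b1}, the key auxiliary fact is the identification $\overline{L} = \widehat{L}_p$ for every $L \lhd G$ of $p$-finite index, i.e.\ the $p$-residual $R_L$ of $L$ coincides with $L \cap R$. The inclusion $R_L \subseteq L \cap R$ is immediate. For the reverse, let $M \lhd L$ be of $p$-finite index and consider its core $M_G := \bigcap_{g \in G} gMg^{-1}$. Because $L$ is normal in $G$, every conjugate $gMg^{-1}$ sits inside $L$ as a normal subgroup of $p$-finite index, and only finitely many distinct conjugates occur (since $[G:L]$ is finite). Hence $L/M_G$ embeds in a finite product of $p$-finite groups and is itself $p$-finite, so $[G:M_G]$ is $p$-finite, giving $M_G \supseteq R$ and therefore $M \supseteq M_G \supseteq L \cap R$. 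Intersecting over all such $M$ yields $R_L \supseteq L \cap R$, and hence equality.

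To finish, each $L_i$ is finitely presented as a finite-index subgroup of $G$, so Lemma~\ref{b1} combined with $\overline{L_i} = \widehat{(L_i)}_p$ gives $b_1(\overline{L_i}) \ge \d (L_i)$. Supermultiplicativity of deficiency applied to $L_i \le H$ gives $\d (L_i) - 1 \ge (\d (H) - 1)[H:L_i]$, so using $[G:L_i] = [G:H][H:L_i]$,
\[
\frac{b_1(\overline{L_i})}{[G:L_i]} \;\ge\; \frac{\d (H) - 1}{[G:H]} + \frac{1}{[G:L_i]} \;\ge\; \vd (G) - \varepsilon.
\]
Feeding this into the limsup estimate yields $b_1^{(2)}(\widehat{G}_p) \ge \vd (G) - \varepsilon$, and letting $\varepsilon \to 0$ closes the proof. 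The only delicate point is the identification $\overline{L} = \widehat{L}_p$: without it one only has the wrong-way surjection $\widehat{L}_p \twoheadrightarrow \overline{L}$, which cannot be fed into Lemma~\ref{b1}. The coring argument, which uses essentially that $L$ is normal in $G$, fills exactly this gap.
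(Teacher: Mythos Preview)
Your proof is correct and follows essentially the same route as the paper: choose a near-optimal $H$, use the identification $\widehat{L}_p \cong L/R$ for normal $L\lhd G$ of $p$-power index together with Lemma~\ref{b1} and supermultiplicativity of deficiency to bound the Betti-number ratios from below, and feed this into Theorem~\ref{main1}. The only cosmetic differences are that you apply Theorem~\ref{main1} directly to $\widehat{G}_p$ (whereas the paper applies it to the image $K$ of $H$ and then uses multiplicativity of $b_1^{(2)}$), and that you spell out the coring argument for $\widehat{L}_p\cong L/R$, which the paper simply asserts.
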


\begin{proof}
Fix some $\e>0$. Let $H$ be a subgroup of $p$-finite index in $G$ that satisfies (\ref{q2}).
Let $K$ be the image of $H$ in $\widehat G_p$ and let $M$ be any subgroup of $p$-finite index in $K$. We denote by $N$ be the full preimage of $M$ in $G$ (see the diagram below).
$$
\begin{array}{ccccc}
M&\lhd & K &\lhd & \widehat G_p\\
\uparrow && \uparrow && \uparrow\\
N&\lhd & H &\lhd & G
\end{array}
$$

Using supermultiplicativity of deficiency and (\ref{q2}), we obtain
\begin{equation}\label{dn}
\d (N)-1\ge (\d (H) -1)[H:N]\ge (\vd (G)-\e )[G:H][H:N].
\end{equation}
Since $[G:N]$ is a power of $p$, $\widehat N_p\cong M$. By (\ref{dn}) and Lemma \ref{b1} we have
\begin{equation}\label{b1m}
b _1 (M)\ge \d (N)> (\vd (G)-\e )[G:H][H:N]=(\vd (G)-\e )[\widehat G_p:K][K:M].
\end{equation}
Since $K$ is residually $p$-finite and (\ref{b1m}) holds for any subgroup $M\le K$ of $p$-finite index, we obtain
$$
b_1^{(2)} (K)\ge (\vd (G) -\e )[\widehat G_p:K]
$$
by Theorem \ref{main1}. Using multiplicativity of  $b_1^{(2)}$ and letting $\e\to 0$, we get (\ref{b12G}).
\end{proof}

\section{Residually finite $p$-groups with positive first $L^2$-betti number}

We are now ready to prove Theorem \ref{main2}. In fact, we prove a stronger result.

\begin{thm}\label{th2a}
For any prime $p$, any integer $n\ge 2$, and any $\e> 0$, there exists a finitely generated infinite residually finite $p$-group $Q$ generated by $n$ elements such that $b_1^{(2)} (Q)\ge n-1-\e$.
\end{thm}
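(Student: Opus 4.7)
The plan is to iteratively apply Lemma \ref{quot} to build a tower of finitely presented quotients $F_n = G_0 \twoheadrightarrow G_1 \twoheadrightarrow \cdots$ with $\vd(G_i) \geq n-1-\e$ for all $i$ and such that every element of $F_n$ acquires $p$-power order in the colimit. Starting with $G_0 = F_n$ (so $\vd(G_0) = n-1$ with witness $H = F_n$, since $\d(F_n) = n$), enumerate $F_n = \{g_0, g_1, \ldots\}$ and inductively define $G_{i+1}$: if the image of $g_i$ in $\widehat{G_i}_p$ has $p$-power order, set $G_{i+1} = G_i$; otherwise apply Lemma \ref{quot} with $\delta = \e/2^{i+2}$ to find an arbitrarily large integer $n_i$ such that $G_{i+1} := G_i/\ll g_i^{p^{n_i}}\rr$ satisfies $\vd(G_{i+1}) \geq \vd(G_i) - \e/2^{i+2}$. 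Telescoping yields $\vd(G_i) \geq n-1-\e/2$ for all $i$, and with $G_\infty := \colim G_i$ and $Q := \widehat{G_\infty}_p$, the group $Q$ is residually $p$-finite, finitely generated on $n$ elements, and a $p$-torsion group (by the enumeration). Crucially, Lemma \ref{quot} allows each $n_i$ to be taken even larger than the minimum required, a flexibility I exploit below.

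To bound $b_1^{(2)}(Q)$ from below I follow the template of the proof of Lemma \ref{b1Gp}, but inside $Q$ rather than inside $\widehat{G_i}_p$. For each $i$ fix $H_i \lhd G_i$ of $p$-finite index with $(\d(H_i)-1)/[G_i:H_i] \geq \vd(G_i) - \e/4 \geq n-1-\e$, and let $K_i \leq Q$ be the image of $H_i$. The goal is to show $b_1^{(2)}(K_i) \geq (n-1-\e)[Q:K_i]$, which by multiplicativity of $b_1^{(2)}$ under finite-index subgroups gives $b_1^{(2)}(Q) \geq n-1-\e$. The key point is to establish the analogue in $Q$ of the isomorphism $\widehat N_p \cong M$ used in the proof of Lemma \ref{b1Gp}: for each $p$-finite index normal $M \lhd K_i$ with full preimage $N \lhd G_i$ (with $N \subseteq H_i$), the natural surjection $\widehat N_p \twoheadrightarrow M$ should be an isomorphism. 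This is the main obstacle and is where the freedom in the $n_j$'s enters: by a diagonal enumeration of the countably many triples $(i, N, S)$ with $N \lhd G_i$ of $p$-finite index contained in $H_i$ and $S \lhd N$ of $p$-finite index, arrange that each relation $g_j^{p^{n_j}}$ added for $j \geq i$ lies in the $G_i$-core $\mathrm{core}_{G_i}(S)$. This forces every such $p$-finite quotient $G_i/\mathrm{core}_{G_i}(S)$ to factor through $Q$, so $\ker(G_i \to Q)$ is contained in each such $\mathrm{core}_{G_i}(S)$, and consequently the pro-$p$ completion of $N$ is identified with its image $M$ in $Q$.

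Granted this, supermultiplicativity of deficiency together with Lemma \ref{b1} yields
\[
b_1(M) = b_1(\widehat N_p) \geq \d(N) \geq (\d(H_i)-1)[H_i:N] + 1 \geq (n-1-\e)[G_i:N] + 1 = (n-1-\e)[Q:K_i][K_i:M] + 1.
\]
Applying Theorem \ref{main1} to the residually $p$-finite group $K_i$ along a chain of such $M$'s with trivial intersection yields $b_1^{(2)}(K_i) \geq (n-1-\e)[Q:K_i]$, and multiplicativity of $b_1^{(2)}$ concludes $b_1^{(2)}(Q) \geq n-1-\e$. Infinitude of $Q$ follows from $b_1^{(2)}(Q) > 0$ for $n \geq 2$ and $\e$ small. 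The main subtlety is the coordination between Lemma \ref{quot} and the diagonal enumeration of constraints securing the isomorphism $\widehat N_p \cong M$: the bookkeeping must be set up so that at each stage $j$ only finitely many of the triples with $i \leq j$ impose constraints on $n_j$, which is then compatible with Lemma \ref{quot}'s arbitrary-largeness.
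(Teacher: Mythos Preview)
There is a fundamental gap. The isomorphism $\widehat N_p \cong M$ on which your whole lower bound rests is simply false, and no bookkeeping can rescue it. Observe that $Q$ is a torsion group, hence so is every finite-index subgroup $M\le K_i\le Q$; in particular $b_1(M)=0$. On the other hand $b_1(\widehat N_p)\ge \d(N)>0$ by Lemma~\ref{b1}, so $\widehat N_p\not\cong M$. More structurally, for a fixed $i$ the statement $\widehat N_p\cong M$ (for $N\lhd G_i$ of $p$-finite index and $M$ its image in $Q$) is equivalent to $\ker(G_i\to Q)=R_{G_i}$, i.e.\ $Q\cong\widehat{(G_i)}_p$. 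But whenever a relation is added at some step $j\ge i$, the element $g_j$ has infinite order in $\widehat{(G_j)}_p$ and hence in $\widehat{(G_i)}_p$ (there is a surjection $\widehat{(G_i)}_p\to\widehat{(G_j)}_p$); thus $g_j^{p^{n_j}}$ is nontrivial in $\widehat{(G_i)}_p$ yet trivial in $Q$, no matter how large $n_j$ is. So your two stated requirements --- that every $g_j^{p^{n_j}}$ with $j\ge i$ lie in $\mathrm{core}_{G_i}(S)$ for all triples $(i,N,S)$, while only finitely many triples constrain each $n_j$ --- are genuinely incompatible, and in fact the first requirement is impossible for any single $n_j$. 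In short, Theorem~\ref{main1} applied to $Q$ (or to any $K_i$) is vacuous here: the very conclusion of Theorem~\ref{th2a} is that $b_1^{(2)}(Q)>0$ while $b_1(M)/[Q:M]=0$ for every finite-index $M$.

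The paper avoids this trap by never trying to estimate $b_1$ of subgroups of $Q$. Instead it works with the \emph{sequence} of groups $\widehat G_k=\widehat{(G_k)}_p$: Lemma~\ref{b1Gp} gives $b_1^{(2)}(\widehat G_k)\ge \vd(G_k)>n-1-\e$ for every $k$, and then Pichot's semi-continuity theorem for $b_1^{(2)}$ on the space of marked groups transfers this to the direct limit $Q=\colim_k \widehat G_k$. Two further pieces of structure are needed that your outline omits: one must carry along finite-index subgroups $N_k\lhd\widehat G_k$ satisfying a nesting condition $\Ker(\hat\alpha_k)\le N_{k-1}$, $N_k\le\hat\alpha_k(N_{k-1})$ and a ball-avoidance condition, precisely to ensure that this colimit is residually finite (this is not automatic for $\colim_k\widehat G_k$, in contrast to $\widehat{(G_\infty)}_p$).
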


\begin{proof}
We use a modification of the main construction from \cite{Osin10}. From now on, let us fix $p$ and denote $\widehat G_p$ simply by $\widehat G$ for a group $G$. Let $F$ be the free group of rank $n$ with basis $X$. We enumerate all elements of $F=\{ 1=f_0, f_1, \ldots \} $ and construct inductively a commutative diagram of quotient groups of $F$ and epimorphisms
\begin{equation}\label{seq}
\begin{array}{ccccc}
G_0 & \xrightarrow{\alpha _1} & G_1 & \xrightarrow{\alpha _2} & \ldots\\
\downarrow && \downarrow && \\
\widehat G_0 & \xrightarrow{\hat\alpha _1} & \widehat G_1 & \xrightarrow{\hat\alpha _2} & \ldots\\
\bigtriangledown && \bigtriangledown &&\\
N_0 && N_1 &&\ldots  ,\\
\end{array}
\end{equation}
where the vertical arrows are natural maps from groups to their images in pro-$p$ completions and for every $k\in \mathbb N\cup\{ 0\}$ the following conditions hold. For simplicity we use the same notation for elements $f_0, f_1, \ldots $ and their images in quotients of $F$.
\begin{enumerate}
\item[(a)] The order of $f_k$ in $\widehat G_k$ is $p$-finite.
\item[(b)] $G_k$ is finitely presented and $\vd (G_k) > n-1-\e $.
\item[(c)] $N_k$ is of $p$-finite index in $\widehat G_k$ and the shortest nontrivial element of $N_k$ has length at least $k$ in the word metric on $\widehat G_k$ corresponding to the natural image of $X$.
\item[(d)] If $k\ge 1$, then $\Ker (\hat \alpha _{k})\le N_{k-1}$ and $N_k\le \hat \alpha _k(N_{k-1})$.
\end{enumerate}

Properties (a)-(c) are easy to verify for $G_0=F$ and $N_0=\widehat G_0$. For $k>0$ we consider two cases.

{\it Case 1.} Suppose that the order of $f_{k}$ in
$\widehat G_{k-1}$ is finite. Note that it is $p$-finite in this case as $\widehat G_{k-1}$ is residually $p$-finite. Let $G_{k}=G_{k-1}$, $\alpha _{k}=\id $, and $\hat\alpha _{k}=\id $. Then (a) and (b) obviously hold. Since $\widehat G_{k}$ is residually $p$-finite, there exists a subgroup $N_{k}\lhd \widehat G_{k}$  of $p$-finite index that satisfies (c). We can always find such $N_{k}$ inside $\hat \alpha_{k} (N_{k-1})$ so that (d) is satisfied as well.

{\it Case 2.} Suppose that the order of  $f_{k}$ in $\widehat G_{k-1}$ is infinite. Let $p^m$ be the order of $f_k$ in $\widehat G_{k-1}/N_{k-1}$. By Lemma \ref{quot} and the inductive assumption, we can choose $n\ge m$ such that the quotient group $G_{k}=G_{k-1}/\ll f_k^{p^n}\rr $ satisfies (b). Let $\alpha _{k}$ be the natural epimorphism $G_{k-1}\to G_k$ and let $\hat \alpha _k$ be the epimorphism induced by $\alpha _k$. As $n\ge m$, $f_k^{p^n}\in N_{k-1}$ in the group $\widehat G_{k-1}$ and hence $\Ker(\hat \alpha _{k})\le N_{k-1}$. Again since $\widehat G_{k}$ is residually finite, there exists a normal subgroup $N_k\le \alpha_k (N_{k-1})$ of $p$-finite index in $\widehat G_{k}$ that satisfies (c). This completes the inductive step.

Let now $Q$ be the (co)limit of the second row in (\ref{seq}). Condition (a) obviously implies that $Q$ is a torsion $p$-group. By (c) and (d) $Q$ is residually finite. Indeed if $q\in Q$ is a nontrivial element of length $k$ with respect to the word metric corresponding to the natural image of $X$ and $g$ is a shortest preimage of $q$ in $\widehat G_{k+1}$, then $g\notin N_{k+1}$ by (c). Note that (d) implies $\Ker (\widehat G_{k+1}\to Q)\le N_{k+1}$. Hence $Q$ maps onto $\widehat G_{k+1}/N_{k+1}$ and $q$ is taken to $g\ne 1$ by this map. Observe also that by (b) all groups $\widehat G_k$ are infinite and hence $Q$ is infinite as well.

Finally we note that Lemma \ref{b1Gp} and property (b) imply that $b_1^{(2)} (\widehat G_k)\ge n-1-\e$. By semi-continuity of the first $L^2$-Betti number (see \cite[Theorem 1]{Pich})  we obtain
$$
b_1^{(2)} (Q)\ge \lim\limits _{k\to \infty} \sup b_1^{(2)} (\widehat G_k)\ge n-1-\e.
$$
\end{proof}

\end{document}